\newcommand{\ud}[0]{\,\mathrm{d}}
\newcommand{\eps}[0]{\varepsilon}
\newcommand{\bddlin}[0]{\mathscr{L}}
\newcommand{\rbound}[0]{\mathscr{R}}
\newcommand{\floor}[1]{\lfloor #1\rfloor}
\newcommand{\ceil}[1]{\lceil #1\rceil}
\newcommand{\abs}[1]{|#1|}
\newcommand{\Babs}[1]{\Big|#1\Big|}
\newcommand{\Norm}[2]{\|#1\|_{#2}}
\newcommand{\BNorm}[2]{\Big\|#1\Big\|_{#2}}
\newcommand{\prob}[0]{\mathbb{P}}
\newcommand{\R}{\mathbb{R}}
\newcommand{\C}{\mathbb{C}}
\newcommand{\N}{\mathbb{N}}
\newcommand{\Z}{\mathbb{Z}}
\numberwithin{equation}{section}
  \let\c@equation\c@subsection
\theoremstyle{plain}
\newtheorem{theorem}[subsection]{Theorem}
\newtheorem{corollary}[subsection]{Corollary}
\newtheorem{lemma}[subsection]{Lemma}
\theoremstyle{definition}
\theoremstyle{remark}
\newtheorem{remark}[subsection]{Remark}
\begin{document}

\title[The vector-valued multiplier theorem]{New thoughts on the vector-valued Mihlin--H\"ormander multiplier theorem}

\author[T.~P.\ Hyt\"onen]{Tuomas P.\ Hyt\"onen}
\address{Department of Mathematics and Statistics, University of Helsinki, P.O.B. 68, FI-00014 Helsinki, Finland}
\email{tuomas.hytonen@helsinki.fi} 

\date{\today}

\subjclass[2000]{42B15 (Primary); 46B09, 46B20 (Secondary)}
\keywords{Fourier multiplier, type and cotype of Banach spaces}


\begin{abstract}
Let $X$ be a UMD space with type $t$ and cotype $q$, and let $T_m$ be a Fourier multiplier operator with a scalar-valued symbol $m$. If $\abs{\partial^{\alpha}m(\xi)}\lesssim\abs{\xi}^{-\abs{\alpha}}$ for all $\abs{\alpha}\leq\floor{n/\max(t,q')}+1$, then $T_m$ is bounded on $L^p(\R^n;X)$ for all $p\in(1,\infty)$. For scalar-valued multipliers, this improves the theorem of Girardi and Weis (J. Funct. Anal., 2003) who required similar assumptions for derivatives up to the order $\floor{n/r}+1$, where $r\leq\min(t,q')$ is a Fourier-type of $X$. However, the present method does not apply to operator-valued multipliers, which are also covered by the Girardi--Weis theorem.
\end{abstract}

\maketitle


\section{Introduction}\label{intro}

For a function $m$ defined on $\R^n\setminus\{0\}$, the classical Mihlin multiplier condition is $\abs{\partial^{\alpha}m(\xi)}\lesssim\abs{\xi}^{-\abs{\alpha}}$ for all $\alpha\in\{0,1\}^n$. By Mihlin's theorem \cite{Mihlin}, this is sufficient for the $L^p(\R^n)$ boundedness of the multiplier operator
\begin{equation*}
  T_m f(x)=\int_{\R^n}m(\xi)\hat{f}(\xi)e^{i2\pi\xi\cdot x}\ud\xi,
\end{equation*}
where $\hat{f}$ is the Fourier transform of $f$. This can also be written as $T_m f=K*f$, where $K=\check{m}$ is the inverse Fourier transform of $m$ in the sense of distributions. Exploiting the convolution point-of-view, H\"ormander proved a variant of the multiplier theorem \cite{Hormander}, where Mihlin's condition is assumed for the derivatives corresponding to the multi-indices of length $\abs{\alpha}\leq\floor{n/2}+1$, with $\floor{x}:=\max\{k\in\Z:k\leq x\}$. As a matter of fact, one may intersect the assumptions of these two theorems, assuming the bounds only for the multi-indices satisfying both $\alpha\in\{0,1\}^n$ and $\abs{\alpha}\leq\floor{n/2}+1$ (see \cite{Hyt:Mihlin}).

%
Bourgain \cite{Bourgain}, McConnell \cite{McConnell} and Zimmermann \cite{Zimmermann} extended Mihlin's (but not H\"ormander's) multiplier theorem to the case of $L^p(\R^n;X)$, the Bochner space with values in a UMD space $X$ (for the definition of UMD, see e.g. \cite{Bourgain}). McConnell also observed that H\"ormander's condition suffices for special multipliers, which are supported in a dyadic annulus $r<\abs{\xi}<2r$, a result which will be recovered here by a different method.

A perhaps more satisfactory vector-valued version of H\"ormander's multiplier theorem was obtained by Girardi and Weis \cite{GW}. They showed that Mihlin bounds for derivatives up to the order $\floor{n/r}+1$ suffice if the UMD space $X$ also has {Fourier-type} $r\in(1,2]$, i.e., the Hausdorff--Young inequality holds for the vector-valued Fourier transform $\mathscr{F}:L^r(\R^n;X)\to L^{r'}(\R^n;X)$. Their result even applies to operator-valued multipliers, with an appropriate formulation of the assumptions.

However, the aim of this note is to show that if one is just interested in scalar-valued multipliers, the Fourier-type assumption may be substantially relaxed. I will show that instead of derivatives up to the order $\floor{n/r}+1$, with $r$ a Fourier-type, it suffices to go only up to the order $\floor{n/\max(t,q')}+1$, where $t$ is a {type} and $q$ a {cotype} for $X$, and $q'=q/(q-1)$ is the conjugate exponent. (I will keep employing the letters $t$, $q$ and $r$ in these mentioned meanings: $t$ for type, $q$ for cotype, and $r$ for Fou\emph{r}ie\emph{r}-type. The letter $p$, which is frequently used for type, is here reserved for a generic exponent of the Lebesgue space $L^p$.)

Note that the new result indeed improves the old one, as Fourier-type $r$ implies both type $t\geq r$ and cotype $q\leq r'$, but neither implication is reversible: e.g. $L^p$, for $p\in(1,\infty)$, has type $t=\min(2,p)$ and cotype $q=\max(2,p)$, hence $\max(t,q')=2$, but only Fourier-type $r=\min(p,p')$. In fact, having $\max(t,q')=2$ is quite typical for the ``common'' UMD spaces appearing in analysis, and the present result shows that H\"ormander's multiplier theorem holds for functions valued in such spaces in its exact classical form. This is not so exciting in the usual $L^p$ spaces, given that the same result in this case could be derived (as is well known) directly from H\"ormander's original result with an application of Fubini's theorem. However, the above mentioned type and cotype properties remain valid also in the noncommutative $L^p$ spaces \cite{Fac}; hence the same is true for the multiplier theorem, and in this case the conclusion appears to be new and nontrivial. (For an interesting recent application of the multiplier theory in noncommutative $L^p$ spaces, although not of the particular result obtained here, see \cite{PotSuk09}.)

The key novelty of the proof is applying an improvement of the contraction principle in the presence of cotype due to myself and Veraar, which was already used by us for relaxing Fourier-type assumptions in certain other results~\cite{HV:smooth}. Unfortunately, this proof does not extend to operator-valued multipliers, so that I cannot fully claim the fall of the Fourier type in the vector-valued multiplier theory.

While the result described above seems to set the new record for scalar-valued multipliers, I have no reason to propose that it should be the best possible. The index $\max(t,q')$ seems more likely to be an artificial product of the proof than an eternal truth. In fact, there appears to be no known reason why the classical H\"ormander theorem, for scalar-valued multipliers, could not be valid in all UMD spaces without any extra conditions.

The reader may recall that Girardi and Weis \cite[Remark~4.5]{GW} do make an assertion concerning the sharpness of the order $\floor{n/r}+1$ by referring to an example in the stability theory of semigroups, which goes back to \cite[Remark 3.7]{Weis:stability} and \cite[Sec.~4]{WW}. But, first of all, this example is set up in the space $X=L^p\cap L^{p'}$ for which $t=q'=r=\min(p,p')$, so the multiplier conditions involving type and cotype or Fourier-type are indistinguishable. Second, the example is an operator-valued one, namely, the square of the resolvent of the infinitesimal generator. Thus neither the possibility of the order $\floor{n/\max(t,q')}+1$ being sufficient even in the operator-valued case, nor the order $\floor{n/2}+1$ being sufficient in the scalar-multiplier case, is directly excluded by the mentioned example. However, proving or disproving either of these conjectures remains a challenge for further investigation.

\subsection*{Acknowledgements}
I would like to thank Mark Veraar who insisted that there should be an application of our paper \cite{HV:smooth} to the theory of Fourier multipliers. I was funded by the Academy of Finland through the projects ``Vector-valued singular integrals'' and ``$L^p$ methods in harmonic analysis''.

\section{Different Mihlin conditions}

Following the approach in \cite{Hyt:Mihlin}, rather than the classical Mihlin multiplier condition, it will be more convenient to consider the ``Mihlin--H\"older condition''
\begin{equation}\label{eq:MihGamma}
  \Babs{\prod_{i:\alpha_i=1}(I-\tau_{h_i e_i})m(\xi)}\lesssim\abs{h^{\alpha\gamma}}\abs{\xi}^{-\abs{\alpha}\gamma},\qquad
  \abs{\xi}>2\abs{h},
\end{equation}
where $\tau_h m(\xi):=m(\xi-h)$ is the translation, $\gamma\in(0,1]$ is a fixed parameter, and the multi-index notation $h^{\alpha}:=\prod_{i=1}^n h_i^{\alpha_i}$ is employed. The condition \eqref{eq:MihGamma} has the advantage of being a simultaneous generalization of both the Mihlin and H\"ormander type assumptions, as shown in the following. More general considerations of kind are found in \cite{Hyt:Mihlin}, but a short argument is provided here for completeness.

\begin{lemma}\label{lem:HolderVsHorm}
Suppose that $\abs{\partial^{\alpha}m(\xi)}\lesssim\abs{\xi}^{-\abs{\alpha}}$ for all $\alpha\in\{0,1\}^n$ such that $\abs{\alpha}\leq\ceil{n\gamma}:=\min\{k\in\Z:k\geq\ceil{n\gamma}\}$. Then \eqref{eq:MihGamma} holds for all $\alpha\in\{0,1\}^n$.
\end{lemma}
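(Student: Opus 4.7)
The plan is to convert each coordinate difference into an integral of the corresponding derivative via $(I-\tau_{h_i e_i})f(\xi)=h_i\int_0^1\partial_i f(\xi-s_i h_i e_i)\ud s_i$, and to decide \emph{how many} such conversions to perform before trivially bounding the remaining differences. Concretely, for any $\beta\leq\alpha$ (coordinatewise) with $\abs{\beta}\leq k:=\ceil{n\gamma}$, iterating the identity gives
\[
\prod_{i:\beta_i=1}(I-\tau_{h_i e_i})m(\xi)
=h^\beta\int_{[0,1]^{\abs{\beta}}}\partial^\beta m\Big(\xi-\sum_{i:\beta_i=1}s_i h_i e_i\Big)\ud s.
\]
The hypothesis yields $\abs{\partial^\beta m(\eta)}\lesssim\abs{\eta}^{-\abs{\beta}}$, and since $\abs{\xi}>2\abs{h}$ the shifted argument remains of magnitude $\gtrsim\abs{\xi}$, so the product is bounded by $\lesssim\abs{h^\beta}\abs{\xi}^{-\abs{\beta}}$. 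Applying the remaining differences $\prod_{i:\alpha_i=1,\beta_i=0}(I-\tau_{h_i e_i})$ trivially, as a sum of $2^{\abs{\alpha}-\abs{\beta}}$ translates evaluated within $\abs{h}$ of $\xi$, yields
\[
\Babs{\prod_{i:\alpha_i=1}(I-\tau_{h_i e_i})m(\xi)}\lesssim \abs{h^\beta}\abs{\xi}^{-\abs{\beta}}
\]
for every admissible $\beta$.

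If $\abs{\alpha}\leq k$, I would simply take $\beta=\alpha$. Since every $\abs{h_i}<\abs{\xi}/2$, the ratio $\abs{h^\alpha}/\abs{\xi}^{\abs{\alpha}}$ is at most $1$; raising it to the power $\gamma\leq 1$ only makes it larger, and the target $\abs{h^{\alpha\gamma}}\abs{\xi}^{-\abs{\alpha}\gamma}=\abs{h^\alpha}^\gamma\abs{\xi}^{-\abs{\alpha}\gamma}$ drops out at once.

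If instead $\abs{\alpha}>k$, the direct route is blocked because the hypothesis does not control $\abs{\partial^\alpha m}$. Here I would choose $\beta\leq\alpha$ of size $\abs{\beta}=k$ consisting of the indices $i$ for which $\abs{h_i}$ is \emph{smallest}. The elementary fact that the geometric mean of the smallest $k$ of $\abs{\alpha}$ nonnegative reals is at most their overall geometric mean then gives $\abs{h^\beta}\leq\abs{h^\alpha}^{k/\abs{\alpha}}$, so the bound becomes $\lesssim\abs{h^\alpha}^{k/\abs{\alpha}}\abs{\xi}^{-k}$. Since $k=\ceil{n\gamma}\geq n\gamma\geq\abs{\alpha}\gamma$, both exponent deficits $k/\abs{\alpha}-\gamma$ and $k-\abs{\alpha}\gamma$ are nonnegative, and $\abs{h^\alpha}\leq(\abs{\xi}/2)^{\abs{\alpha}}$ lets me trade the former for the latter to reach $\abs{h^\alpha}^\gamma\abs{\xi}^{-\abs{\alpha}\gamma}$, as required.

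The one genuinely delicate point is the $\abs{\alpha}>k$ regime, in which only a fractional power of the full difference-product can be captured from the available derivatives. The geometric-mean selection of the smallest coordinates is the essential device: it is precisely what redistributes the surplus $\abs{\alpha}-k$ of differences (handled only trivially) into the right fractional power of $\abs{h^\alpha}$ needed to land on the H\"older scale $\gamma$.
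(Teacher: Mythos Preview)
Your proof is correct and follows the same strategy as the paper: convert the differences in a sub-multi-index $\beta$ with $\abs{\beta}\leq k:=\ceil{n\gamma}$ into integrals of derivatives, bound the remaining $\abs{\alpha}-\abs{\beta}$ differences trivially as a sum of $2^{\abs{\alpha}-\abs{\beta}}$ translates, and then pass from the resulting bound $\abs{h^\beta}\abs{\xi}^{-\abs{\beta}}$ to the H\"older scale using $k/\abs{\alpha}\geq\gamma$. The only variation is in the case $\abs{\alpha}>k$: the paper obtains $\abs{h^{\alpha k/\abs{\alpha}}}\abs{\xi}^{-k}$ by taking the geometric mean of the bounds $\abs{h^\beta}\abs{\xi}^{-k}$ over \emph{all} $\binom{\abs{\alpha}}{k}$ choices of $\beta\leq\alpha$ with $\abs{\beta}=k$, whereas you select the single $\beta$ corresponding to the $k$ smallest $\abs{h_i}$ and invoke the geometric-mean inequality; both devices yield the identical intermediate estimate.
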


\begin{proof}
Note that $(I-\tau_{h_i e_i})m(\xi)=h_i\int_0^1\partial_i m(\xi-t_i h_i e_i)\ud t_i$, and iterating this in all the relevant variables gives
\begin{equation*}
  \prod_{i:\alpha_i=1}(I-\tau_{h_i e_i})m(\xi)
  =h^{\alpha}\int_{[0,1]^{\abs{\alpha}}}\partial^{\alpha}m\big(\xi-\sum_{i:\alpha_i=1}t_i h_i e_i\big)
   \prod_{i:\alpha_i=1}\ud t_i.
\end{equation*}
For $\abs{\alpha}\leq\ceil{n\gamma}$, the integrand may be directly estimated by $\abs{\xi-\sum t_i h_i e_i}^{-\abs{\alpha}}\lesssim\abs{\xi}^{-\abs{\alpha}}$, and the claim follows since $\abs{h^{\alpha}}\abs{\xi}^{-\abs{\alpha}} \lesssim(\abs{h^{\alpha}}\abs{\xi}^{-\abs{\alpha}})^{\gamma} =\abs{h^{\alpha\gamma}}\abs{\xi}^{-\abs{\alpha\gamma}}$.

For $\abs{\alpha}>\ceil{n\gamma}$, one first notes that the expression on the left of \eqref{eq:MihGamma} is dominated by the sum of $2^{\abs{\alpha-\beta}}$ similar expressions with the multi-index $\beta\leq\alpha$ in place of $\alpha$. Choosing $\abs{\beta}=\ceil{n\gamma}$, the earlier considerations give, for the left side of \eqref{eq:MihGamma}, the upper bound $\abs{h^{\beta}}\abs{\xi}^{-\abs{\beta}}$. Take the geometric average of these upper bounds over all the $\binom{\abs{\alpha}}{\ceil{n\gamma}}$ choices of $\beta\leq\alpha$ with $\abs{\beta}=\floor{n\gamma}$ to obtain the new upper bound $\abs{h^{\alpha\ceil{n\gamma}/\abs{\alpha}}}\abs{\xi}^{-\ceil{n\gamma}}$. As $\ceil{n\gamma}/\abs{\alpha}\geq n\gamma/n=\gamma$, the estimate is complete.
\end{proof}

\section{The main result}

The main result, already sketched in the introduction, is the following:

\begin{theorem}\label{thm:MihCotype}
Let $X$ be a UMD space with type $t$ and cotype $q$. Then the Mihlin--H\"older condition \eqref{eq:MihGamma} for all $\alpha\in\{0,1\}^n$ and some $\gamma>1/\max(t,q')$ is sufficient for the boundedness of $T_m$ on $L^p(\R^n;X)$ for all $p\in(1,\infty)$. In particular, the Mihlin--H\"ormander condition $\abs{\partial^{\alpha}m(\xi)}\lesssim\abs{\xi}^{-\abs{\alpha}}$ for all $\alpha\in\{0,1\}^n$ with $\abs{\alpha}\leq\floor{n/\max(t,q')}+1$ is sufficient.
\end{theorem}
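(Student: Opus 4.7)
The plan is to combine a Littlewood--Paley decomposition in frequency with a Fourier-series expansion on each dyadic annulus, and then to sum the resulting series of modulation-translation operators using the improved contraction principle in UMD spaces with cotype from \cite{HV:smooth}, in combination with the type hypothesis. By Lemma \ref{lem:HolderVsHorm} it is enough to establish the conclusion under the Mihlin--H\"older assumption \eqref{eq:MihGamma}.

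First I would fix a smooth dyadic partition of unity $\{\psi_k\}_{k\in\Z}$ with $\psi_k(\xi)=\psi_0(\xi/2^k)$ and $\psi_0$ supported in $\{1/2\leq|\xi|\leq 2\}$, and write $m=\sum_k m\psi_k$. The UMD-valued Littlewood--Paley theorem reduces the claim to a Rademacher-randomized bound of the form
\[
\BNorm{\sum_k\varepsilon_k T_{m\psi_k}f}{L^p(\Omega;L^p(\R^n;X))}\lesssim\Norm{f}{L^p(\R^n;X)}.
\]
A change of variables $\eta=\xi/2^k$ then absorbs the scale and leaves, uniformly in $k$, a scalar multiplier $\tilde m_k$ supported in a fixed annulus contained in a cube $Q$. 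The scale-invariance of \eqref{eq:MihGamma} passes to the $\tilde m_k$ uniformly in $k$.

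Next I would expand each $\tilde m_k$ in a Fourier series on $Q$, say $\tilde m_k(\eta)=\sum_{j\in\Z^n}a_{k,j}\phi_j(\eta)$. Undoing the scaling turns $T_{m\psi_k}$ into a series $\sum_j a_{k,j}U_{k,j}$ of modulation-translation operators $U_{k,j}$ with uniform $L^p(\R^n;X)$-norm bounds. Since the coefficients $a_{k,j}$ are \emph{scalars}, the torus Hausdorff--Young inequality applies without any Fourier-type assumption on $X$; combined with the $\gamma$-H\"older smoothness in each variable provided by \eqref{eq:MihGamma}, it yields $\bigl(\sum_j|a_{k,j}|^s\bigr)^{1/s}\lesssim 1$ for every $s>1/\gamma$, uniformly in $k$. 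This is the step at which the Fourier-type of $X$ exits the picture.

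It remains to assemble these estimates. Applied to each frequency level $k$, the improved contraction principle of \cite{HV:smooth} replaces a crude absolute summation $\sum_j|a_{k,j}|$ by an $\ell^{q'}$-type summability condition on $(a_{k,j})_j$, paying only the cotype $q$ of $X$; the outer randomized sum over $k$ is then handled by the type $t$. Together the two steps require the summability exponent $s$ to satisfy $s\leq\max(t,q')$, which by the previous paragraph is achievable precisely when $\gamma>1/\max(t,q')$. The main obstacle --- and the key novelty --- is in this last step: pairing the inner cotype-based contraction principle with the outer type estimate so as to produce the symmetric exponent $\max(t,q')$. This is also the point at which the argument becomes strictly scalar-valued, since an operator-valued analogue of Hausdorff--Young on the torus would reintroduce a Fourier-type constraint on $X$.
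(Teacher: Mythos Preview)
Your overall architecture --- Littlewood--Paley reduction, a Fourier-type expansion on each dyadic shell, scalar Hausdorff--Young for the coefficients, and the improved contraction principle from \cite{HV:smooth} --- is essentially the paper's strategy (the paper uses the continuous Fourier integral of the kernel rather than a Fourier series, but this is a cosmetic difference). However, your description of the decisive last step is not correct, and as written the argument does not reach the exponent $\max(t,q')$.

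First, the contraction principle of \cite{HV:smooth} is not applied ``to each frequency level $k$''. The Rademacher variables are indexed by the dyadic level $k$, and the lemma is applied \emph{across} $k$, with the Fourier index $j$ playing the role of the auxiliary measure-space variable: if $L^p(\R^n;X)$ has cotype $q<\tilde q$, one obtains
\[
  \Big(\sum_{j}\BNorm{\sum_k\eps_k a_{k,j}f_k}{p}^{\tilde q}\Big)^{1/\tilde q}
  \lesssim \sup_k\Big(\sum_j|a_{k,j}|^{\tilde q}\Big)^{1/\tilde q}\,\BNorm{\sum_k\eps_k f_k}{p},
\]
after which the right-hand Rademacher sum is closed by the Littlewood--Paley inequality, not by type. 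Combined with H\"older over dyadic blocks in $j$ (which contributes the dual exponent $\tilde q'$) and with scalar Hausdorff--Young plus \eqref{eq:MihGamma}, this yields convergence precisely when $\gamma>1/\tilde q'$; letting $\tilde q\downarrow q$ gives $\gamma>1/q'$. Nothing here uses type, and at this stage one only knows boundedness for those $p$ for which $L^p(\R^n;X)$ inherits cotype $q$, i.e.\ $p\in(1,q]$; the full range of $p$ requires a Calder\'on--Zygmund extrapolation, which you do not mention.

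Second, the sentence ``the outer randomized sum over $k$ is then handled by the type $t$'' does not correspond to any valid estimate: applying the type inequality to $\sum_k\eps_k T_{m\psi_k}f$ produces $(\sum_k\Norm{T_{m\psi_k}f}{p}^t)^{1/t}$, which diverges. The type hypothesis enters the paper's proof \emph{only by duality}: type $t$ of $X$ is equivalent (for UMD $X$) to cotype $t'$ of $X^*$, so one runs the cotype argument in $L^{p'}(\R^n;X^*)$ to get $\gamma>1/(t')'=1/t$, and then passes to $L^p(\R^n;X)$ via the adjoint $T_m^*=T_{\tilde m}$. Without this duality step your argument gives only $\gamma>1/q'$, not $\gamma>1/\max(t,q')$.

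A smaller omission: the operators $U_{k,j}$ are translations by $2^{-k}$ times a vector of size $\sim|j|$, and their uniform-in-$k$ control inside the Rademacher sum is not free; in the paper this is Bourgain's translation lemma, which introduces the harmless factor $\log(2+|j|)$.
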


\begin{proof}
The heart of the matter consists of proving that the Mihlin--H\"older condition of order $\gamma>1/q'$ is sufficient for the $L^p(\R^n;X)$ boundedness of $T_m$ provided that $X$ is a UMD space and $L^p(\R^n;X)$ (rather than just $X$) has cotype $q$. So let us assume this (returning to the original assumptions at the end of the proof), and choose some auxiliary number $\tilde{q}$ so that $\gamma>1/\tilde{q}'>1/q'$ (thus in particular $\tilde{q}\in(q,\infty)$).

The proof is modelled after the approach in \cite{Hyt:Mihlin}, with an application of a result from \cite{HV:smooth} at one critical point.
Choose a usual Littlewood--Paley function $\phi_0\in\mathscr{S}(\R^n)$ so that $\sum_{j\in\Z}\hat\phi_0(2^j\xi)=1$ for all $\xi\neq 0$, as well as $0\leq\hat\phi_0\leq\hat\chi_0:=1_{\{2^{-1}\leq\abs{\xi}\leq 2\}}$, and let $\phi_j(x):=2^{jn}\phi_0(2^j x)$, $\chi_j(x):=2^{jn}\chi_0(2^j x)$. I will use Bourgain's vector-valued Littlewood--Paley inequality \cite[Theorem~3]{Bourgain}, which involves the  independent random signs $\eps_j$ on some probability space $\Omega$ with $\prob(\eps_j=1)=\prob(\eps_j=-1)=\frac{1}{2}$. I write simply $\Norm{\ }{p}$ for the $L^p$ norm, whether on $L^p(\R^n;X)$ or on $L^p(\R^n\times\Omega;X)$, which should be understood from the context in such a way that all the variables are always ``integrated out''. Then $T_m f=\check{m}*f=K*f$ is estimated by
\begin{equation*}
\begin{split}
  \Norm{K*f}{p}
  &\eqsim\BNorm{\sum_j\eps_j(\phi_j*K)*(\chi_j*f)}{p}
  =\BNorm{\sum_j\eps_j\int_{\R^n}(\phi_j*K)(y)\tau_y(\chi_j*f)\ud y}{p} \\
  &\leq\int_{\R^n}\BNorm{\sum_j\eps_j 2^{-jn}(\phi_j*K)(2^{-j}y)\tau_{2^{-j}y}(\chi_j*f)}{p}\ud y \\
  &\leq\int_{\R^n}\BNorm{\sum_j\eps_j 2^{-jn}(\phi_j*K)(2^{-j}y)(\chi_j*f)}{p}\log(2+\abs{y})\ud y.
\end{split}
\end{equation*}
by the Littlewood--Paley inequality, the triangle inequality after a change of variables, and Bourgain's estimate for the translation operators \cite[Lemma 10]{Bourgain}.

For $\alpha\in\{0,1\}^n$, $\mu\in\N^{\alpha}:=\{\nu\in\N^n:\nu_i=0\text{ if }\alpha_i=0\}$, define (as in \cite[Sec.~3]{Hyt:Mihlin})
\begin{equation*}
\begin{split}
  E(\alpha) &:=\{x\in\R^n:\abs{x_i}\leq 1\text{ if }\alpha_i=0;\abs{x_i}>1\text{ if }\alpha_i=1\},\\
  E(\alpha,\mu) &:=\{x\in E(\alpha): 2^{\mu_i}<\abs{x_i}\leq 2^{\mu_i+1}\text{ if }\alpha_i=1\}.
\end{split}
\end{equation*}
so that disjointly
\begin{equation*}
  \R^n=\bigcup_{\alpha\in\{0,1\}^n}E(\alpha),\quad
  E(\alpha)=\bigcup_{\mu\in\N^{\alpha}}E(\alpha,\mu).
\end{equation*}

Then, with $f_j:=\chi_j*f$, $K_j(y):=2^{-jn}(\phi_j*K)(2^{-j}y)$, one estimates
\begin{equation}\label{eq:useHolder}
\begin{split}
  &\int_{E(\alpha,\mu)}\BNorm{\sum_j\eps_j K_j(y)f_j}{p}\log(2+\abs{y})\ud y \\
  &\lesssim\Big(\int_{E(\alpha,\mu)}\BNorm{\sum_j\eps_j K_j(y)f_j}{p}^{\tilde q}\ud y\Big)^{1/\tilde{q}}
      2^{\abs{\mu}/\tilde{q}'}(1+\abs{\mu}) \\
\end{split}
\end{equation}
by H\"older's inequality.

Now comes the key step. Recall that $L^p(\R^n;X)$ has cotype $q<\tilde{q}$. This is exactly the condition of \cite[Lemma~3.1(2)]{HV:smooth} under which an $L^{\tilde{q}}$-version of the contraction principle holds, allowing to pull out the multiplying functions $K_j(y)$ from the randomized norm as follows:
\begin{equation}\label{eq:useHV}
  \lesssim\sup_j\Big(\int_{E(\alpha,\mu)}\abs{K_j(y)}^{\tilde q}\ud y\Big)^{1/\tilde{q}}
      \BNorm{\sum_j\eps_j f_j}{p} 2^{\abs{\mu}/\tilde{q}'}(1+\abs{\mu}).
\end{equation}
The Littlewood--Paley inequality says that
\begin{equation*}
   \BNorm{\sum_j\eps_j f_j}{p}\eqsim\Norm{f}{p}.
\end{equation*}

For $y\in E(\alpha,\mu)$, the functions $1-\exp(i2\pi y_k 2^{-\mu_k-3})$, for those $k$ with $\alpha_k=1$, are bounded away from zero, and hence the integral involving $K_j$ can be estimated by
\begin{equation}\label{eq:useHY}
\begin{split}
  &\Big(\int_{E(\alpha,\mu)}\abs{K_j(y)}^{\tilde q}\ud y\Big)^{1/\tilde{q}} \\
  &\lesssim\Big(\int_{E(\alpha,\mu)}\Babs{\prod_{k:\alpha_k=1}
      [1-\exp(i2\pi y_k 2^{-\mu_k-3})]K_j(y)}^{\tilde{q}}\ud y\Big)^{1/\tilde{q}} \\
  &=\Big(\int_{E(\alpha,\mu)}\Babs{\mathscr{F}^{-1}\Big\{\prod_{k:\alpha_k=1}
      [I-\tau_{e_k 2^{-\mu_k-3}}]\hat{K}_j\Big\}(y)}^{\tilde q}\ud y\Big)^{1/\tilde{q}} \\
  &\lesssim\Big(\int_{\R^n}\Babs{\prod_{k:\alpha_k=1}
      [I-\tau_{e_k 2^{-\mu_k-3}}][\hat{\phi}_0(\cdot)m(2^j\cdot)](\xi)}^{\tilde{q}'}\ud\xi\Big)^{1/\tilde{q}'}
   \lesssim 2^{-\abs{\mu}\gamma},
\end{split}
\end{equation}
by the Hausdorff--Young inequality (for a scalar-valued function, thus avoiding any reference to Fourier-type!) in the second-to-last step, and the assumed Mihlin--H\"older conditions \eqref{eq:MihGamma} in the last one: these are invariant under the replacement of $m(\xi)$ by $m(2^j\xi)$, and in combination with the regularity and support properties of $\hat\phi_0$, they give the stated bound.

Thus it has been shown that
\begin{equation*}
\begin{split}
  \Norm{K*f}{p}
  &\lesssim\sum_{\alpha\in\{0,1\}^n}\sum_{\mu\in\N^{\alpha}}\int_{E(\alpha,\mu)}\BNorm{\sum_j\eps_jK_j(y)f_j}{p}\log(2+\abs{y})\ud y \\
  &\lesssim\sum_{\alpha\in\{0,1\}^n}\sum_{\mu\in\N^{\alpha}}2^{-\abs{\mu}\gamma}\Norm{f}{p}2^{\abs{\mu}/\tilde{q}'}(1+\abs{\mu})
   \lesssim\Norm{f}{p},
\end{split}
\end{equation*}
since the series converges by the choice that $1/\tilde{q}'<\gamma$. This completes the proof of the boundedness of $T_m$ when $m$ satisfies the Mihlin--H\"older condition \eqref{eq:MihGamma} with $\gamma>1/q'$ and $L^p(\R^n;X)$ has cotype $q$.

If it is only assumed, as in the statement of the theorem, that $X$ has cotype $q$, it is still true that $L^p(\R^n;X)$ has cotype $q$ when $p\in(1,q]$. Thus the previous conclusion holds in this range. But the multiplier condition implies that the associated kernel satisfies H\"ormander's integral condition (cf.\ \cite[Sec.~7]{Hyt:Mihlin})
\begin{equation*}
  \int_{\abs{x}>2\abs{y}}\abs{K(x-y)-K(x)}\ud x\lesssim 1,
\end{equation*}
so that the $L^p(\R^n;X)$ boundedness for just one $p\in(1,\infty)$ already bootstraps to all $p\in(1,\infty)$ by the classical vector-valued extension of the Calder\'on--Zygmund theory due to Benedek, Calder\'on and Panzone \cite{BCP}, and the additional condition on the cotype of $L^p(\R^n;X)$ has been removed.

Let then $X$ have type $t$, which should allow for a Mihlin--H\"older condition of order $\gamma>1/t$. But the type $t$ property of $X$ implies (and in the case of a UMD space, is equivalent to) the cotype $t'$ for $X^*$, which is also a UMD space. By what has already been proven, the multiplier theorem holds in $L^{p'}(\R^n;X^*)$ for Mihlin--H\"older multipliers of order $\gamma>1/t''=1/t$. But the adjoint of $T_m$ with respect to the standard duality of $L^p(\R^n;X)$ and $L^{p'}(\R^n;X^*)$ is $T_{\tilde{m}}$, where $\tilde{m}(\xi)=m(-\xi)$, and this reflection preserves the class of Mihlin--H\"older multipliers. Hence the multiplier theorem with $\gamma>1/t$, and thus with $\gamma>\min(1/t,1/q')=1/\max(t,q')$, is also valid in $L^p(\R^n;X)$.

Finally, the assertion of Theorem~\ref{thm:MihCotype} concerning Mihlin--H\"ormander multipliers is an immediate consequence of the Mihlin--H\"older version by Lemma~\ref{lem:HolderVsHorm}, and hence the theorem is now completely proven.
\end{proof}

\section{Variations and observations}

An inspection of the proof leads to the following corollary, in which the sufficiency of the classical H\"ormander condition is achieved in all UMD spaces by restricting to special classes of multipliers. The case of multipliers supported in a dyadic annulus is due to McConnell \cite{McConnell} by a different method. Until now, the only available proof was McConnell's original one, so that this result remained somewhat isolated from the more recent developments of the vector-valued multiplier theory (such as \cite{GW,Hyt:Mihlin}) building on the work of Bourgain \cite{Bourgain}.

\begin{corollary}
Let $X$ be a UMD with type $t$ and cotype $q$, and let $m:\R^n\setminus\{0\}\to\C$ be either supported in a dyadic annulus $r<\abs{\xi}<2r$, or positively homogeneous of order zero, i.e., $m(\lambda\xi)=m(\xi)$ for $\lambda>0$. Then the Mihlin--H\"older condition \eqref{eq:MihGamma} for all $\alpha\in\{0,1\}^n$ and some $\gamma>1/2$ is sufficient for the boundedness of $T_m$ on $L^p(\R^n;X)$ for all $p\in(1,\infty)$. In particular, the Mihlin--H\"ormander condition $\abs{\partial^{\alpha}m(\xi)}\lesssim\abs{\xi}^{-\abs{\alpha}}$ for all $\alpha\in\{0,1\}^n$ with $\abs{\alpha}\leq\floor{n/2}+1$ is sufficient.
\end{corollary}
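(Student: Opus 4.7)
The plan is to revisit the proof of Theorem~\ref{thm:MihCotype}, locate the one place where cotype of $L^p(\R^n;X)$ was actually invoked---namely the $L^{\tilde{q}}$-contraction principle applied in \eqref{eq:useHV}---and show that in each of the two special cases this step can be performed by elementary means with $\tilde{q}=2$. Once $\tilde{q}=2$ is available, the Hausdorff--Young step in \eqref{eq:useHY} reduces to Plancherel for scalar functions, which requires no vector-valued hypothesis, and the geometric sum $\sum_{\mu}2^{-|\mu|\gamma}2^{|\mu|/2}(1+|\mu|)$ converges exactly when $\gamma>1/2$.

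For the annular case $\operatorname{supp}m\subseteq\{r<|\xi|<2r\}$, I would observe that $K_j(y)=\mathscr{F}^{-1}[\hat{\phi}_0(\cdot)m(2^j\cdot)](y)$ vanishes unless the support of $\hat{\phi}_0$ meets the rescaled support of $m$, which happens for only a uniformly bounded set $J$ of indices $j$. In the resulting \emph{finite} randomized sum $\sum_{j\in J}\eps_j K_j(y)f_j$, I would replace \eqref{eq:useHV} by the triangle inequality together with the bound $\Norm{f_j}{p}\leq\Norm{\sum_{k}\eps_k f_k}{p}$, valid in any Banach space (take expectation over the Rademacher variables in $f_j=\mathbb{E}[\eps_j\sum_k\eps_k f_k]$ and apply Jensen). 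The $L^2$-integral of $\sup_{j\in J}|K_j|^2$ over $E(\alpha,\mu)$ is then dominated by $\sum_{j\in J}\int_{E(\alpha,\mu)}|K_j|^2\ud y$, and each individual term is controlled by the Plancherel version of \eqref{eq:useHY}.

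For the positively homogeneous case, I would exploit the algebraic windfall that $m(2^j\xi)=m(\xi)$ forces $K_j(y)=\mathscr{F}^{-1}[\hat{\phi}_0 m](y)$ to be \emph{independent} of $j$; call this common function $K_0$. Then $\sum_j\eps_j K_j(y)f_j=K_0(y)\sum_j\eps_j f_j$, and the scalar $K_0(y)$ can be pulled out exactly, without invoking any contraction principle.

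In either situation, combining the resulting kernel bound with Hölder's inequality as in \eqref{eq:useHolder} and summing over $\mu\in\N^{\alpha}$ and $\alpha\in\{0,1\}^n$ yields the Mihlin--H\"older claim for every $\gamma>1/2$, and the Mihlin--H\"ormander consequence is immediate from Lemma~\ref{lem:HolderVsHorm}. The main point to check is really just that the two substitutes---triangle inequality plus finiteness of $J$ in the annular case, and scalar factorization in the homogeneous case---genuinely capture the role played by cotype in the general argument, with no hidden dependence on $p$, $r$, or further properties of $m$; in each case this is transparent from the structure of the multiplier.
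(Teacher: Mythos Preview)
Your proposal is correct and follows essentially the same route as the paper's proof: both identify \eqref{eq:useHV} as the unique place where cotype enters, and both observe that the homogeneous case gives $K_j\equiv K_0$ (so the scalar pulls out exactly) while the annular case leaves only boundedly many nonzero $K_j$ (the paper says three), making the passage from $\int\sup_j|K_j|^{\tilde q}$ to $\sup_j\int|K_j|^{\tilde q}$ trivial. Your triangle-inequality-plus-Jensen argument in the annular case is a slightly more explicit variant of what the paper leaves implicit by saying the step ``becomes trivial''; both then take $\tilde q=2$ so that \eqref{eq:useHY} reduces to Plancherel, forcing the threshold $\gamma>1/2$.
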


\begin{proof}
If $m$ is positively homogeneous (even the dyadic version $m(2^j\xi)=m(\xi)$ actually suffices), then $K_j(y)\equiv K_0(y)$, and the estimate \eqref{eq:useHV} where cotype $q$ of $L^p(\R^n;X)$ was used to pull out the supremum of the $L^{\tilde{q}}$ norms of the $K_j$ becomes trivial without any assumptions. The same applies if $m$ is supported in a dyadic annulus, in which case only three different $K_j$ are nonzero. Since \eqref{eq:useHV} was the only place where the cotype assumption played a role in the proof of Theorem~\ref{thm:MihCotype}, we can repeat the argument for the special multipliers of either kind in an arbitrary UMD space $X$. Note, however, that one still needs to use the scalar-valued Hausdorff--Young inequality with exponent $\tilde{q}$, which gives the restriction $\tilde{q}\geq 2$.
\end{proof}

\begin{remark}
It may be interesting to shortly recall the proof of the operator-multiplier version of Theorem~\ref{thm:MihCotype} (due to Girardi and Weis \cite{GW}, and fine-tuned in \cite{Hyt:Mihlin}, whose approach is taken here). In the assumptions of the mentioned theorem, $\max(t,q')$ is replaced by a Fourier type $r$ of $X$, and the condition on the multiplier involves an $R$-bounded version (see \cite{GW} for this notion) of the Mihlin--H\"older estimate, for all $\alpha\in\{0,1\}^n$ and some $\gamma>1/r$,
\begin{equation}\label{eq:MihGammaR}
  \rbound\big(\{\abs{\xi}^{\abs{\alpha}\gamma}\abs{h^{-\alpha\gamma}}
    \prod_{i:\alpha_i=1}[I-\tau_{h_i e_i}]m(\xi): \abs{\xi}>2\abs{h}\}\big)\lesssim 1.
\end{equation}
By an argument similar to Lemma~\ref{lem:HolderVsHorm}, this is implied by the $R$-bounded Mihlin--H\"ormander condition $\rbound(\{\abs{\xi}^{\abs{\alpha}}\partial^{\alpha}m(\xi):\xi\in\R^n\setminus\{0\}\})\lesssim 1$ for all $\alpha\in\{0,1\}^n$ with $\abs{\alpha}\leq\floor{n/r}+1$.

As for the proof of the multiplier theorem, again, it may be assumed that even $L^p(\R^n\times\Omega;X)$ has Fourier-type $r$, as this is the case for $p\in[r,r']$, and for other values of $p$ one extrapolates the boundedness of $T_m$ by the Benedek--Calder\'on--Panzone theory \cite{BCP}. Now the estimates \eqref{eq:useHolder} through \eqref{eq:useHY} are replaced by
\begin{equation*}
\begin{split}
  &\int_{E(\alpha,\mu)}\BNorm{\sum_j\eps_j K_j(y)f_j}{p}\log(2+\abs{y})\ud y \\
  &\lesssim\Big(\int_{E(\alpha,\mu)}\BNorm{\sum_j\eps_j K_j(y)f_j}{p}^{r'}\ud y\Big)^{1/r'}
      2^{\abs{\mu}/r}(1+\abs{\mu}) \\
  &\lesssim\Big(\int_{\R^n}\BNorm{\prod_{k:\alpha_k=1}
      [1-\exp(i2\pi y_k 2^{-\mu_k-3})]\sum_j\eps_j K_j(y)f_j}{p}^{r'}\ud y\Big)^{1/r'} 2^{\abs{\mu}/r}(1+\abs{\mu}) \\
  &\lesssim\Big(\int_{\R^n}\BNorm{\sum_j\eps_j\prod_{k:\alpha_k=1}
      [I-\tau_{e_k 2^{-\mu_k-3}}][\hat{\phi}_0(\cdot)m(2^j\cdot)](\xi)f_j}{p}^{r}\ud\xi\Big)^{1/r} 2^{\abs{\mu}/r}(1+\abs{\mu}) \\
  &\lesssim \BNorm{\sum_j\eps_j f_j}{p}2^{-\abs{\mu}\gamma}2^{\abs{\mu}/r}(1+\abs{\mu})
    \lesssim \Norm{f}{p}2^{-\abs{\mu}(\gamma-1/r)}(1+\abs{\mu}),
\end{split}
\end{equation*}
where the Hausdorff--Young inequality for an $L^p(\R^n\times\Omega;X)$-valued function was used (under the assumption that the mentioned space has Fourier-type $r$) in the third step, and the $R$-bounded Mihlin condition \eqref{eq:MihGammaR} in the fourth one.

The key novelty of the proof of Theorem~\ref{thm:MihCotype} was pulling out (with the help of \cite[Lemma 3.1(2)]{HV:smooth})  the functions $K_j$ before the application of the Hausdorff--Young inequality, so that it could be applied to a scalar-valued function instead of a vector-valued one. In the operator-multiplier case, this trick is unavailable for two reasons: first, we do not have a version of \cite[Lemma 3.1(2)]{HV:smooth} to pull out operator-valued functions, and second, pulling them out would probably only make things worse as the operator space $\bddlin(X)$, unlike the scalar field $\C$, has even weaker properties than the space $X$.
\end{remark}

\bibliography{multiplier-literature}
\bibliographystyle{plain}

\end{document}